\newcommand{\cR}{{\mathcal R}}
\newcommand{\Q}{{\mathbb Q}}
\newcommand{\cO}{{\mathcal O}}
\newcommand{\gE}{{\mathfrak E}}
\newcommand{\gF}{{\mathfrak F}}
\newcommand{\gQ}{{\mathfrak Q}}
\newcommand{\gS}{{\mathfrak S}}
\newcommand{\rk}{\operatorname{rk}}
\renewcommand{\Im}{\operatorname{Im}}
 \newcommand{\gr}{\operatorname{Gr}}
\newcommand{\hgr}{\operatorname{\mathfrak{Gr}}}
\newtheorem{thm}{Theorem}[section]
\newtheorem{corol}[thm]{Corollary}
\newtheorem{prop}[thm]{Proposition}
\newtheorem{lem}[thm]{Lemma}
\newtheorem{defin}[thm]{Definition}
\newtheorem{conj}[thm]{Conjecture}
\theoremstyle{remark}
\newtheorem{rema}[thm]{Remark}
 \newenvironment{remark}{\begin{rema}}{\qee\end{rema}}
 \newcommand{\qee}{\ \hfill\hskip1pt $\triangle$}
\title{\bf\large FILTRATIONS OF NUMERICALLY FLAT HIGGS BUNDLES\\[5pt]
AND CURVE SEMISTABLE HIGGS BUNDLES\\[5pt] 
ON CALABI-YAU MANIFOLDS}
\author[1]{Ugo Bruzzo}
\author[2]{Armando Capasso}
\affil[1]{SISSA (Scuola Internazionale Superiore di Studi Avanzati),\par\vskip-3pt Via Bonomea 265, 34136 Trieste, Italy}
\affil[1]{INFN (Istituto Nazionale di Fisica Nucleare), Sezione di Trieste}
\affil[1]{IGAP (Institute for Geometry and Physics), Trieste}
\affil[1]{Arnold-Regge Center for Algebra, Geometry \par\vskip-3.5pt and Theoretical Physics, Torino}
\affil[2]{Via Tito Livio 17, 80040 Volla (NA), Italy}
\date{}
\begin{document}
\maketitle
%\begin{center}
%$^\S$ SISSA (Scuola Internazionale Superiore di Studi Avanzati), \\ Via Bonomea 265, 34136 Trieste, Italy  \\[3pt]
%$^\sharp$  Istituto Nazionale di Fisica Nucleare, Sezione di Trieste\\[3pt]
%$^\flat$ Arnold-Regge Center,  via P.~Giuria 1,  10125 Torino, Italy \\[3pt]
% $\ddag$   Institute for Geometry and Physics, Trieste
% \end{center}
 
% \bigskip
\begin{abstract} 
We consider Higgs bundles satisfying a notion of numerical flatness (H-nflatness) that was introduced in \cite{BHR,BG3}, and show that they have Jordan-H\"older filtrations whose quotients are stable, locally free and H-nflat. This is applied to show that curve semistable Higgs bundles on simply connected Calabi-Yau manifolds have vanishing discriminant. 
\end{abstract}

\let\svthefootnote\thefootnote
\let\thefootnote\relax\footnote{
\hskip-2.05\parindent {\em \hskip1mm Date: } \today\\
{\em 2010 Mathematics Subject Classification:} 14F05, 14H60, 14J60 \\ 
{\em Keywords:} Numerically flat Higgs bundles, curve semistability, Calabi-Yau manifolds \\
Email: {\tt bruzzo@sissa.it, armando1985capasso@gmail.com} \\
}
\addtocounter{footnote}{-1}\let\thefootnote\svthefootnote

\newpage

\section{Introduction}
Let $X$ be a smooth complex projective variety of dimension $n\ge 2$. We say that a vector bundle $E$ on $X$ is {\em curve semistable} if for every morphism $f\colon C \to X$, where $C$ is a smooth projective irreducible curve, the pullback $f^{*} E$ is semistable. After the results presented in \cite{Naka,BHR}, it turns out that curve semistable vector bundles can be characterized as follows.

\begin{thm} The following conditions are equivalent:
\begin{enumerate} \itemsep=-3pt
\item the vector bundle $E$ is curve semistable;
\item $E$ is semistable with respect to some polarization $H$, and
$\Delta(E) \cdot H^{n-2}= 0$, where $\Delta(E)$ is the characteristic class (discriminant)
$$ \Delta(E) = c_2(E) - \frac{r-1}{2r}c_1(E)^2 \in H^4(X,\Q)$$
and $r = \rk E$;
\item $E$ is semistable with respect to some polarization $H$, and $\Delta(E) = 0.$
\end{enumerate}
\end{thm}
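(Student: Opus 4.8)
The plan is to prove the cycle of implications $(i)\Rightarrow(iii)\Rightarrow(ii)\Rightarrow(i)$. The implication $(iii)\Rightarrow(ii)$ is immediate, since $\Delta(E)=0$ in $H^4(X,\Q)$ trivially forces $\Delta(E)\cdot H^{n-2}=0$ for every polarization. So the work is concentrated in $(i)\Rightarrow(iii)$ and $(ii)\Rightarrow(i)$.

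For $(i)\Rightarrow(iii)$, I would first note that curve semistability forces $H$-semistability for every ample $H$: restricting $E$ to a general complete intersection curve of large degree, semistability of the restriction (part of the hypothesis) together with the Mehta--Ramanathan restriction theorem yields $H$-semistability. The crux is then to promote the numerical vanishing to vanishing of the full class, and the device I would use is to pass to $\End(E)=E\otimes E^\ast$. Tensor products of semistable bundles of equal slope are semistable on a curve, so $\End(E)$ is again curve semistable, and it has $c_1(\End(E))=0$. For any $f\colon C\to X$ the bundle $f^\ast\End(E)$ is then semistable of degree $0$, hence nef; since nefness of a bundle is tested on curves in its projectivization, $\End(E)$ is nef. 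Being self-dual, $\End(E)$ is therefore numerically flat in the sense of Demailly--Peternell--Schneider, and so admits a filtration with Hermitian-flat quotients; in particular all its rational Chern classes vanish. As $c_2(\End(E))=2r\,\Delta(E)$ with $r=\rk E$, this gives $\Delta(E)=0$ in $H^4(X,\Q)$, which is $(iii)$.

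For $(ii)\Rightarrow(i)$, let $E$ be $H$-semistable with $\Delta(E)\cdot H^{n-2}=0$, and let $\bigoplus_i G_i$ be the associated graded of a Jordan--H\"older filtration, so the $G_i$ are stable with a common slope $\mu_H$ and the Chern classes, hence $\Delta(E)\cdot H^{n-2}$, are unchanged. Writing $\xi_i=c_1(G_i)/\rk G_i$, a direct Chern class computation gives $\Delta(E)\cdot H^{n-2}=\sum_i\Delta(G_i)\cdot H^{n-2}-\tfrac{1}{2r}\sum_{i<j}\rk G_i\,\rk G_j\,(\xi_i-\xi_j)^2\cdot H^{n-2}$. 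Here each $\Delta(G_i)\cdot H^{n-2}\ge 0$ by the Bogomolov inequality, while each $\xi_i-\xi_j$ is orthogonal to $H^{n-1}$ (equal slopes), so $(\xi_i-\xi_j)^2\cdot H^{n-2}\le 0$ by the Hodge index theorem; thus both groups of terms are non-negative. Their vanishing forces $\Delta(G_i)\cdot H^{n-2}=0$ for every $i$ and $\xi_i=\xi_j$ numerically for all $i,j$. The Kobayashi--Hitchin correspondence (Bando--Siu in this generality) then equips each stable $G_i$ with a Hermitian--Einstein metric whose trace-free curvature has $L^2$-norm a multiple of $\Delta(G_i)\cdot H^{n-2}=0$, so $G_i$ is projectively flat. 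Projective flatness is preserved by pullback, so $f^\ast G_i$ is polystable on every curve $C$; and since the $\xi_i$ agree numerically, the bundles $f^\ast G_i$ share a common slope on $C$. Hence $f^\ast E$, an iterated extension of equal-slope semistable bundles, is semistable, proving $(i)$.

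I expect the main obstacle to be the passage, in $(i)\Rightarrow(iii)$, from the curve-by-curve numerical information to vanishing of the full cohomology class $\Delta(E)$: individual curves only see $\Delta(E)$ paired with $1$-cycles, which is far too weak to kill the class directly. The key that unlocks this is the reduction to $\End(E)$ together with the Demailly--Peternell--Schneider structure theory for numerically flat bundles, which converts nefness plus self-duality into genuine vanishing of all Chern classes. In $(ii)\Rightarrow(i)$ the delicate point is instead the Hodge-index step, which is what guarantees that the Jordan--H\"older factors have numerically proportional first Chern classes and hence remain equal in slope after an arbitrary pullback.
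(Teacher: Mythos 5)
The paper does not actually prove this theorem: it is stated as a known result, with the proofs living in the cited works of Nakayama \cite{Naka} and Bruzzo--Hern\'andez Ruip\'erez \cite{BHR}. So your proposal has to be measured against those arguments. Your (iii)$\Rightarrow$(ii) is trivially fine, and your (i)$\Rightarrow$(iii) is correct and is in fact a clean variant: passing to $\End(E)$, noting every pullback $f^{*}\End(E)$ is semistable of degree zero hence nef, deducing nefness of $\End(E)$ by testing on curves in its projectivization, invoking the Demailly--Peternell--Schneider structure theorem for the numerically flat self-dual bundle $\End(E)$, and finishing with $c_2(\End(E))=2r\,\Delta(E)$. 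The references instead work with the normalized tautological class $\theta=c_1(\mathcal{O}_{\mathbb{P}E}(1))-\tfrac1r\pi^{*}c_1(E)$, prove that curve semistability is equivalent to nefness of $\theta$, and combine the identity $\pi_{*}(\theta^{r+1})=-\Delta(E)$ with the Bogomolov inequality; your route buys you the full vanishing $\Delta(E)=0$ in one step, without the Grothendieck--Segre computation and without Bogomolov in this direction.

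Your (ii)$\Rightarrow$(i), however, has a genuine gap in the analytic step. The Jordan--H\"older quotients $G_i$ of a $\mu$-semistable locally free sheaf are in general only torsion-free: they need be neither reflexive nor locally free. Bando--Siu theory requires reflexive sheaves, so ``Bando--Siu in this generality'' does not apply to the $G_i$ as they stand; and even after replacing $G_i$ by $G_i^{\vee\vee}$ (which is legitimate numerically, since $\Delta(G_i^{\vee\vee})\cdot H^{n-2}\le\Delta(G_i)\cdot H^{n-2}$), the vanishing $\Delta\cdot H^{n-2}=0$ does \emph{not} force local freeness: the ideal sheaf $\mathcal{I}_p\subset\mathcal{O}_{\mathbb{P}^3}$ of a point is stable of degree $0$ with $\Delta(\mathcal{I}_p)\cdot H=0$, yet it is neither reflexive nor locally free. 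For a non-locally-free quotient, your concluding step --- ``projective flatness is preserved by pullback, so $f^{*}G_i$ is polystable on every curve'' --- breaks down for curves meeting the singular set of $G_i$, and the pulled-back filtration of $f^{*}E$ also acquires Tor terms. The missing ingredient, which is precisely where Nakayama's and Simpson's treatments do real work, is a lemma showing the quotients are in fact locally free. One proves it by ascending induction on the filtration: the bottom piece is a saturated subsheaf of $E$, hence reflexive, and a \emph{reflexive} stable sheaf with vanishing discriminant pairing is locally free and projectively flat; then, at each stage, $T_i=G_i^{\vee\vee}/G_i$ is supported in codimension $\ge 3$, the vanishing $\operatorname{Ext}^{j}(T_i,F_{i+1})=0$ for $j<3$ (valid because $F_{i+1}$ is already known to be locally free) lets you lift the extension $0\to F_{i+1}\to F_i\to G_i\to 0$ along $G_i\hookrightarrow G_i^{\vee\vee}$, and reflexivity of the saturated subsheaf $F_i$ then forces $T_i=0$. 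With that lemma supplied, the rest of your argument (Bogomolov plus Hodge index to equalize the classes $\xi_i$ and kill each $\Delta(G_i)\cdot H^{n-2}$, projective flatness, and semistability of iterated extensions of equal-slope semistable bundles on curves) is exactly the standard proof.
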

 
\begin{remark} This Theorem implies that if a vector bundle has vanishing discriminant, then it is semistable with respect to a polarization if and only if it is semistable with respect to {\em all} polarizations.
\end{remark}
 
If $\gE=(E,\phi)$ is a Higgs bundle --- i.e., a vector bundle $E$ equipped with a morphism $\phi \colon E \to E\otimes\Omega^1_X$ such that the composition
\begin{equation}\label{phi2}
\phi\wedge \phi \colon E \xrightarrow{\phi} E\otimes\Omega^1_X
\xrightarrow{\phi\otimes\operatorname{id}} E\otimes\Omega^1_X \otimes\Omega^1_X
\to E \otimes\Omega^2_X
\end{equation}
vanishes --- we may say again that $\gE$ is curve semistable if all pullbacks $f^{*}\gE$ are semistable as Higgs bundles. While conditions (ii) and (iii) are equivalent also in this setting, and they imply condition (i), as it was proved in \cite{BHR,BG3}, it is not clear if condition (i) implies the others. Therefore we state this fact as a conjecture. 

\begin{conj} A curve semistable Higgs bundle $\gE$ has vanishing discriminant.
\label{theconj}
\end{conj}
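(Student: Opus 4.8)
The plan is to reproduce, in the Higgs category, the three-step mechanism behind Theorem 1.1 for ordinary bundles: pass from curve semistability to ordinary semistability, manufacture from $\gE$ an associated Higgs bundle that is H-nflat, and then extract the vanishing of the discriminant from the structure theory of H-nflat Higgs bundles. For the first step, restrict $\gE$ to a general complete-intersection curve $C=H_1\cap\dots\cap H_{n-1}$ of large degree. Curve semistability makes $f^{*}\gE$ semistable for the inclusion $f\colon C\to X$, and this already forces $\gE$ to be $H$-semistable by the easy direction of the restriction theorem (any destabilising Higgs-invariant subsheaf of $\gE$ would restrict to a destabilising subsheaf on $C$, whose slope scales with the degree). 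Since conditions (ii) and (iii) are known to be equivalent for Higgs bundles, it then suffices to prove the purely numerical assertion $\Delta(\gE)\cdot H^{n-2}=0$; and the Higgs Bogomolov inequality already gives $\Delta(\gE)\cdot H^{n-2}\geq 0$, so we must supply the reverse inequality.

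For the second step I would attach to $\gE$ the endomorphism Higgs bundle $\gF=\mathcal{E}nd(\gE)$ with its induced Higgs field, a genuine locally free Higgs bundle with $c_1(\gF)=0$ and $\Delta(\gF)=2r\,\Delta(\gE)$, where $r=\rk\gE$. The expected characterisation — the Higgs analogue of Nakayama's theorem — is that $\gE$ is curve semistable exactly when $\gF$ is H-nflat. I would prove the direction needed here by testing on curves: for every $g\colon C\to X$ one has $g^{*}\gF=\mathcal{E}nd(g^{*}\gE)$, and $g^{*}\gE$ being semistable makes $g^{*}\gF$ a semistable Higgs bundle of degree zero on a curve, hence numerically flat there. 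Since H-numerical flatness is formulated through precisely such curve restrictions, this yields H-nflatness of $\gF$ on $X$.

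The third step is to invoke the main result announced above: the H-nflat Higgs bundle $\gF$ admits a Jordan--H\"older filtration whose successive quotients $\gQ_1,\dots,\gQ_m$ are stable, locally free and H-nflat. Because $\Delta(\,\cdot\,)\cdot H^{n-2}$ is additive along a filtration of locally free sheaves all of whose graded pieces have $c_1\cdot H^{n-1}=0$ (the mixed $c_1$-pairings between the pieces are forced to vanish by semistability, so the cross terms drop out), the vanishing of $\Delta(\gF)\cdot H^{n-2}$ — and hence of $\Delta(\gE)\cdot H^{n-2}$ — follows as soon as each stable quotient satisfies $\Delta(\gQ_i)\cdot H^{n-2}=0$. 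This collapses the entire problem to a single statement about a \emph{stable} H-nflat Higgs bundle.

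The hard part is exactly this residual statement: that a stable, locally free, H-nflat Higgs bundle $\gQ$ has $\Delta(\gQ)\cdot H^{n-2}=0$ on an arbitrary base $X$. For ordinary bundles the corresponding fact is the Demailly--Peternell--Schneider theorem — a numerically flat bundle is an iterated extension of Hermitian-flat (unitary) bundles, so all its Chern classes vanish — and one wants the Higgs counterpart, namely that via Simpson's nonabelian Hodge correspondence a stable H-nflat Higgs bundle arises from a genuine flat bundle, whence $\Delta(\gQ)=0$. The obstruction is that the Higgs field carries curvature information invisible to the curve test, so H-nflatness by itself need not collapse the Higgs structure onto a local system. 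I would attack this by seeking a genuine Higgs version of the Demailly--Peternell--Schneider structure theorem, producing a filtration of $\gQ$ by Higgs subbundles corresponding to unitary representations; on a simply connected Calabi--Yau manifold, where $\pi_1(X)$ is trivial and Yau's Ricci-flat metric rigidifies the harmonic bundle, this mechanism already closes the argument, and the general conjecture amounts to removing precisely this dependence on $\pi_1$ and the canonical class.
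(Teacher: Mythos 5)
You should first be aware that the statement you set out to prove is Conjecture \ref{theconj} of the paper, which the paper explicitly does \emph{not} prove: it is an open problem, and the paper's contribution is to establish it only for simply connected Calabi-Yau manifolds (via the equivalent form, Conjecture \ref{thesecondconj}, that H-nflat Higgs bundles have vanishing Chern classes). Your reduction pipeline is sound as far as it goes and mirrors the known framework: curve semistability gives semistability with respect to any polarization (the \textquotedblleft easy\textquotedblright\ direction of Mehta-Ramanathan, exactly as the paper notes); passing to $\End(\gE)$ and relating curve semistability of $\gE$ to H-nflatness of the endomorphism bundle is the equivalence of the two forms of the Conjecture, proved in \cite{BLL} (though your justification glosses over the fact that H-nefness is defined inductively via Higgs Grassmannians, not directly by curve restrictions, so testing on curves requires a nontrivial result from \cite{BHR,BG3}, not just the definition); and the reduction to stable pieces via a Jordan-H\"older filtration with locally free, stable, H-nflat quotients is precisely Theorem \ref{3.18ter}, the paper's main technical result.

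The genuine gap is your final step, and it is twofold. First, the mechanism you propose for the residual statement --- that a stable H-nflat Higgs bundle $\gQ$ \textquotedblleft arises from a genuine flat bundle via Simpson's nonabelian Hodge correspondence, whence $\Delta(\gQ)=0$\textquotedblright\ --- is circular: Simpson's correspondence between stable Higgs bundles and flat connections takes the vanishing of $c_1\cdot H^{n-1}$ \emph{and} of $\operatorname{ch}_2\cdot H^{n-2}$ (equivalently $\Delta\cdot H^{n-2}$) as a \emph{hypothesis}, so you cannot use it to derive the vanishing of the discriminant. A \textquotedblleft Higgs version of the Demailly-Peternell-Schneider structure theorem\textquotedblright\ is not available in the literature; its existence would essentially be the Conjecture itself, and you concede as much in your closing sentence. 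Second, even in the Calabi-Yau case that you claim your mechanism closes, the paper's actual argument is different and more specific: it replaces $\gE$ by the graded module of the filtration \eqref{JH} (which does not change Chern classes), so that one may assume $\gE$ polystable and H-nflat, and then invokes Corollary 2.6 of \cite{BBGL}, which says that on a simply connected Calabi-Yau manifold a \emph{polystable} Higgs bundle has $\phi=0$; the Higgs structure thus disappears entirely, and the conclusion $c_i(E)=0$ follows from the ordinary DPS theorem \cite{DPS} applied to the now numerically flat vector bundle $E$. Your appeal to \textquotedblleft Yau's Ricci-flat metric rigidifying the harmonic bundle\textquotedblright\ is a heuristic for this, but as written it is not a proof, and it again presupposes the harmonic/flat structure whose existence is exactly what is at stake.
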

(The fact that a curve semistable Higgs bundle $\gE$ is semistable with respect to any polarization is quickly proved using the\,\textquotedblleft easy\textquotedblright\,direction of the Mehta-Ramanathan restriction theorem, see e.g.~\cite{Si2}.)

After \cite{BLG}, we say that a smooth projective variety $X$ is a {\em Higgs variety} if this conjecture holds on it. What is known so far about the characterization of Higgs varieties is what follows:
\begin{itemize}\itemsep=-3pt
\item smooth projective varieties whose tangent bundle is numerically effective are Higgs \cite{BLG};
\item K3 surfaces are Higgs \cite{BLL};
\item varieties related to a Higgs variety by some standard geometric constructions, such as \'etale coverings, and others, are Higgs; see \cite{BLG} for details.
\end{itemize} 
The main result of this paper is to show that simply connected Calabi-Yau manifolds of all dimensions are Higgs varieties, generalizing \cite{BLL}, where this was proved for K3 surfaces. This will be accomplished by proving an equivalent version of the Conjecture \ref{theconj}, which is stated in terms of a class of Higgs bundles that have been called {\em Higgs numerically flat} (for short, H-nflat) \cite{BHR,BG3}; the main technical points are two. First we prove that H-nflat Higgs bundles have a particular kind of filtrations, and then we apply a result given in \cite{BBGL} which states that on a simply connected Calabi-Yau manifold a polystable Higgs bundle always has a vanishing Higgs field. 

We review the definition of H-nflat Higgs bundles. Let us remind that a line bundle $L$ on a smooth projective variety $X$ is said to be \emph{numerically effective} (\emph{nef}, for short) if for every irreducible curve $C$ in $X$ the inequality $c_1(L)\cdot C \geq0 $ holds. A vector bundle $E$ is said to be nef if the tautological line bundle $\mathcal{O}_{\mathbb P E}(1)$ on the projectivization $\mathbb{P}E$ is nef (we adopt the convention according to which the projectivization $\mathbb{P} E$ parameterizes rank one locally free quotients of $E$, in the sense described for instance in \cite{Fulton} or \cite{Har}). $E$ is said to be {\em numerically flat} if it is nef, and the dual bundle $E^\vee$ is nef as well. One can also introduce the {\em Grassmann bundles} associated with $E$: for every integer $s$ such that $ 0 < s < \rk E$, the variety $\gr_s(E)$ is a bundle over $X$, whose fibres are the Grassmann varieties of the fibres of $E$, which parameterizes the rank $s$ locally free quotients of $E$. Of course $\gr_1(E) = \mathbb PE$. Denoting by $\pi_s\colon \gr_s(E) \to X$ the projection, on each variety $\gr_s(E)$ there is a {\em universal quotient bundle} $Q_s$ of $\pi_s^{*} E$, which turns out to be numerically effective if $E$ is numerically effective.

In \cite{BHR} the {\em Higgs Grassmannians} $\hgr_s(\gE)$ of a Higgs bundle $\gE=(E,\phi)$ were introduced as suitably defined closed subschemes of the Grassmann bundles $\gr_s(E)$; again in the same sense, they parameterize rank $s$ locally free Higgs quotients of $\gE$. These Higgs Grassmannians are used to provide a notion of {\em Higgs numerically effective} Higgs bundle, i.e., a generalization of the notion of numerically effective vector bundles that is sensitive to the Higgs field, and again, a Higgs bundle is {\em Higgs numerically flat} (H-nflat) if both $\gE$ and its dual Higgs bundle $\gE^\vee$ are Higgs numerically effective.

Now Conjecture \ref{theconj} can be rephrased as follows:

\begin{conj} If $\gE=(E,\phi)$ is an H-nflat Higgs bundle, then $c_i(E)=0$ for all $i>0$.
\label{thesecondconj}
\end{conj}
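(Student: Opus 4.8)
The plan is to imitate, in the Higgs setting, the theorem of Demailly--Peternell--Schneider that a numerically flat vector bundle has vanishing Chern classes: reduce to the stable case by the filtration theorem of this paper, pin down the numerical invariants, and then invoke Simpson's nonabelian Hodge correspondence to obtain a flat bundle. First I would apply the Jordan--H\"older filtration: an H-nflat Higgs bundle $\gE=(E,\phi)$ admits a filtration by Higgs subbundles whose successive quotients $\gQ_j=(Q_j,\psi_j)$ are stable, locally free and H-nflat. Since the total Chern class is multiplicative under extensions (Whitney), $c(E)=\prod_j c(Q_j)$, so it suffices to prove $c_i(Q)=0$ for $i>0$ whenever $\gQ=(Q,\psi)$ is stable, locally free and H-nflat. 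From now on I assume $\gE$ is of this type, of rank $r$.

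Next I would read off the numerical consequences of H-nflatness. As both $\gE$ and $\gE^\vee$ are Higgs numerically effective, the determinant $\det E$ (with the induced Higgs structure) is numerically flat, hence numerically trivial, so $c_1(E)=0$ in $H^2(X,\R)$; in particular $\gE$ is semistable with $\mu_H(\gE)=0$ for every polarization $H$. The decisive point is the vanishing $\Delta(E)\cdot H^{n-2}=0$. One inequality, $\Delta(E)\cdot H^{n-2}\ge 0$, is the Bogomolov-type inequality for H-nef Higgs bundles from \cite{BHR,BG3} (equivalently, Bogomolov's inequality for the semistable bundle $E$, noting $\Delta(E)\cdot H^{n-2}=c_2(E)\cdot H^{n-2}$ because $c_1(E)=0$).

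Granting the vanishing $\Delta(E)\cdot H^{n-2}=0$, the rest falls into place. By the equivalence of conditions (ii) and (iii) in the Higgs setting --- already established in \cite{BHR,BG3} --- a semistable Higgs bundle with $\Delta\cdot H^{n-2}=0$ in fact has $\Delta(E)=0$; together with $c_1(E)=0$ this gives $c_2(E)=\tfrac{r-1}{2r}c_1(E)^2=0$. Thus $\gE$ is a stable Higgs bundle with $c_1(E)=c_2(E)=0$, and Simpson's theorem identifies it with a semisimple (here irreducible, by stability) $\C$-local system on $X$. A local system is a flat bundle, hence carries a connection of zero curvature, and so all of its real Chern classes vanish: $c_i(E)=0$ for all $i>0$, as desired.

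The crux, and the step I expect to be the real obstacle, is the reverse inequality $\Delta(E)\cdot H^{n-2}\le 0$. For an ordinary numerically flat bundle this is automatic from the nefness of the tautological class $\xi=c_1(\cO_{\P E}(1))$ on the whole projective bundle $\P E$: nefness gives $\xi^{r+1}\cdot\pi^{*}H^{n-2}\ge 0$, and the projection formula identifies the left-hand side with the Segre number $\int_X s_2(E)\cdot H^{n-2}=-\,c_2(E)\cdot H^{n-2}$ (using $c_1(E)=0$), so that $c_2(E)\cdot H^{n-2}\le 0$; combined with the Fulton--Lazarsfeld positivity $c_2(E)\cdot H^{n-2}\ge 0$ this forces the vanishing. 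In the Higgs situation the tautological class is known to be nef only on the Higgs Grassmannians $\hgr_s(\gE)$, which are proper closed subschemes of the full Grassmann bundles $\gr_s(E)$; on such a subscheme the intersection-theoretic computation of the Segre numbers of $E$ need not survive, so the bound $\Delta(E)\cdot H^{n-2}\le 0$ is not available in general. Overcoming this --- carrying out the Segre computation on $\hgr_s(\gE)$, or finding another route to $\Delta(E)\cdot H^{n-2}\le 0$ --- is precisely where the unrestricted conjecture resists proof, which is why the present paper instead settles the case of simply connected Calabi--Yau manifolds, where a polystable Higgs bundle has vanishing Higgs field and the statement reduces to the classical vector-bundle theorem.
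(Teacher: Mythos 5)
You were asked to prove what is, in the paper, precisely Conjecture \ref{thesecondconj}: it is open, and the paper never proves it in this generality --- it only establishes (Theorem 4.1) the special case of simply connected Calabi--Yau manifolds. To your credit, your proposal does not pretend otherwise. Everything up to the crux is sound: the reduction via the filtration of Theorem \ref{3.18ter} and the Whitney formula to the stable, locally free, H-nflat case; the vanishing $c_1(E)=0$ in real cohomology from H-nflatness; the Higgs--Bogomolov inequality $\Delta(E)\cdot H^{n-2}\ge 0$; and the chain $\Delta(E)\cdot H^{n-2}=0 \Rightarrow \Delta(E)=0$ (the (ii) $\Leftrightarrow$ (iii) equivalence of \cite{BHR,BG3}) $\Rightarrow$ flatness via Simpson $\Rightarrow$ $c_i(E)=0$. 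Your diagnosis of the missing step is also exactly right: in the classical case the reverse inequality $\Delta(E)\cdot H^{n-2}\le 0$ comes from nefness of $\xi=c_1(\cO_{\P E}(1))$ on \emph{all} of $\P E$, via $\pi_*\left(\xi^{r+1}\right)=s_2(E)$, whereas H-nefness only controls the tautological class on the Higgs Grassmannians $\hgr_s(\gE)$, which are proper closed subschemes, so the Segre-class computation is lost. Hence, as a proof of the statement, the proposal has a genuine --- and explicitly acknowledged --- gap: it is a correct reduction of the conjecture to the inequality $\Delta(E)\cdot H^{n-2}\le 0$ for stable H-nflat Higgs bundles, not a proof.

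Two smaller points. First, your parenthetical ``equivalently, Bogomolov's inequality for the semistable bundle $E$'' is off: the underlying vector bundle of a Higgs-semistable bundle need not be semistable as a vector bundle, so the inequality $\Delta(E)\cdot H^{n-2}\ge 0$ must be taken from Simpson's Bogomolov inequality for semistable Higgs sheaves; the inequality itself is fine. Second, your closing description of the paper's actual route is accurate, and it shows exactly how the obstacle is sidestepped in the Calabi--Yau case: one replaces $\gE$ by the graded object of the filtration \eqref{JH}, which is polystable and H-nflat, and then Corollary 2.6 of \cite{BBGL} forces $\phi=0$, so that $E$ is numerically flat in the ordinary sense and \cite{DPS} applies --- the Higgs field is killed outright rather than the intersection-theoretic argument being transplanted to $\hgr_s(\gE)$. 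Your first move (reduction to stable quotients via Theorem \ref{3.18ter}) coincides with the paper's, so the proposal is a faithful map of the problem; it simply, and necessarily, stops where the conjecture does.
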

(This fact is true for numerically flat vector bundles, see \cite{DPS}.) The  easy fact that    the two forms of the Conjecture are equivalent  is proved for instance in \cite{BLL}.

As we already anticipated, the main technical tool proved in this paper is the existence of a special kind of filtrations of H-nflat bundles: indeed, Theorem \ref{3.18ter} states that an H-nflat bundle on a smooth projective variety has a filtration (which is a Jordan-H\"older filtration) whose quotients are locally free, stable and H-nflat. Also results from \cite{BBG} will play an important role; there in particular it is proved that kernel and cokernel of a morphism of H-nflat Higgs bundles are themselves locally free and H-nflat.

In \cite{crelle} a notion of numerical effectiveness for Higgs bundles was given in terms of bundle metrics: a Higgs bundle $\gE$ which carries a metric which satisfies a suitable positiveness condition is said to be {\em $1$-H-nef}; and when that happens also for the dual bundle, we say that $\gE$ is {\em $1$-H-nflat}. While $1$-H-nefness can be shown to imply H-nefness, it is not clear whether the opposite implication is true as well (on the other hand, the two notions are equivalent for line bundles and for Higgs vector bundles on curves). In Section \ref{5} we shall show, as another application of the Theorem on the filtrations of H-nflat bundles, that on a Higgs variety H-nflat bundles are $1$-H-nflat, and actually that this property characterizes Higgs varieties (thus, the fact the notions of H-nflatness and 1-H-nflatness coincide is still another form of the Conjecture \ref{theconj}).

We conclude this introduction with the basic definitions about Higgs sheaves. Let $X$ be a smooth $n$-dimensional projective variety over the complex numbers, equipped with a polarization $H$. The degree of a coherent $\mathcal O_X$-module $F$ is the integer number 
$$\deg F=c_1(F) \cdot H^{n-1} $$
and if $F$ has positive rank, its slope is defined as 
$$\mu( F) = \frac{\deg F}{\rk F}. $$
 
\begin{defin} A Higgs sheaf on $X$ is a pair $\gE=(E,\phi)$, where $E$ is a coherent sheaf on $X$, and $\phi\colon E\to E\otimes \Omega_X^1$ is a morphism of $\mathcal O_X$-modules such that $\phi\wedge\phi=0$ (see eq.~\eqref{phi2}). A section $s$ of $E$ is $\phi$-invariant if there exists a section $\lambda$ of $\Omega_X^1$ such that $\phi(s)=s\otimes\lambda$. A Higgs subsheaf of a Higgs sheaf $(E,\phi)$ is a $\phi$-invariant subsheaf $G$ of $E$, i.e., $\phi(G) \subset G\otimes\Omega_X^1$. A Higgs quotient of $\gE$ is a quotient of $E$ such that the corresponding kernel is $\phi$-invariant. A Higgs bundle is a Higgs sheaf whose underlying coherent sheaf is locally free.

If $\gE=(E,\phi)$ and $\mathfrak G= (G,\psi)$ are Higgs sheaves, a morphism $f\colon \gE \to \mathfrak G $ is a homomorphism of $\mathcal O_X$-modules $f\colon E\to G$ such that the diagram
$$\xymatrix{
E\ar[r]^f \ar[d]_\phi & G \ar[d]^\psi \\
E\otimes\Omega^1_X \ar[r]^{f\otimes\operatorname{id}} & G\otimes\Omega^1_X }
$$
commutes.
\label{defHnef}
\end{defin}

\begin{defin} A torsion-free Higgs sheaf $ \gE=(E,\phi) $ is semistable (respectively, stable) if $\mu(G)\le \mu(E)$ (respectively, $\mu(G)< \mu(E)$) for every Higgs subsheaf $\mathfrak G=(G,\psi)$ of $ \gE$ with $0<\rk G < \rk E$. It is polystable if it is a direct sum of stable Higgs sheaves having the same slope.\end{defin}

For future use, we remind that semistable Higgs sheaves admit Jordan-H\"older filtrations; i.e., if $\gE$ is a semistable Higgs sheaf, there is a filtration in Higgs sheaves
$$ 0 = \gF_{m+1} \subset \gF_m \subset \dots \subset \gF_0 = \gE $$
whose quotients $\gF_i/\gF_{i+1}$ are stable and have all the same slope as $\gE$ \cite{Si2}. The associated graded module
$$\operatorname{Grad}(\gE) =\bigoplus_{i=0}^{m} \gF_i/\gF_{i+1}$$
is unique up to isomorphism.

\paragraph{Ackowledgements.} We thank Valeriano Lanza for useful discussions and suggestions.  Support for this research was provided by PRIN ``Geometria delle variet\`a algebriche'' and INdAM-GNSAGA. This paper was finalized while U.B.~was visiting the Department of Mathematics of Universidade Federal da Para\'iba, Jo\~ao Pessoa, Brazil.
 
\section{H-nflat Higgs bundles}
 
In this section we remind the main definitions concerning Higgs numerically flat Higgs bundles. Let $E$ be a rank $r$ vector bundle on a smooth projective variety $X$, and let $0<s<r$ an integer number. Let $\pi_s\colon\operatorname{Gr}_s(E) \to X $ be the Grassmann bundle parameterizing rank $s$ locally free quotients of $E$ \cite{Fulton}. In the exact sequence of vector bundles on $\operatorname{Gr}_s(E)$ 
\begin{equation}\label{univ}
\xymatrix{
0\ar[r] & S_{r-s,E}\ar[r]^(.55){\psi} & \pi_s^{*}E\ar[r]^(.45){\eta}\ar[r] & Q_{s,E}\ar[r] & 0
}
\end{equation}
$S_{r-s,E}$ is the universal rank $r-s$ subbundle of $\pi_s^{*} E$ and $Q_{s,E}$ is the universal rank $s$ quotient.
 
Let now $\gE\,=\,(E,\phi) $ be a rank $r$ Higgs bundle on $X$. One defines closed subschemes $\hgr_s(\gE)\subset\gr_s(E)$ as the zero loci of the composite morphisms
\begin{equation}\label{lambda}
(\eta\otimes\text{Id})\circ\pi_s^{*}(\phi)\circ\psi\,\colon\, S_{r-s,E}\to Q_{s,E}\otimes\pi_s^{*}\Omega_X^1.
\end{equation}
The restriction of \eqref{univ} to $\hgr_s(\gE)$ yields a universal exact sequence
\begin{equation}\label{g1}
\xymatrix{
0\ar[r] & \gS_{r-s,E}\ar[r]^(.55){\psi} & \rho_s^{*}\gE\ar[r]^(.45){\eta}\ar[r] & \gQ_{s,E}\ar[r] & 0,
}
\end{equation}
where $\mathfrak Q_{s,\gE}=Q_{s,E}\vert_{\hgr_s(\gE)}$ is equipped with the quotient Higgs field induced by the Higgs field $\rho_s^{*}\phi$ (here $\rho_s= \pi_s\vert_{\hgr_s(\gE)}\colon\hgr_s(\gE)\to X$).
The scheme $\hgr_s(\gE)$ enjoys the usual universal property: a morphism of varieties $f\colon T \rightarrow X$ factors through $\hgr_s(\gE)$ if and only if the pullback $f^{*}\gE$ admits a locally free rank $s$ Higgs quotient. In that case the pullback of the above universal sequence on $\hgr_s(E)$ gives the desired quotient of $f^{*}\gE$.

\begin{defin}\label{moddef}
A Higgs bundle $\gE=(E,\phi)$ of rank one is said to be Higgs numerically effective (H-nef for short) if $E$ is numerically effective in the usual sense. If $\rk\gE \geq 2$, we inductively define H-nefness by requiring that
\begin{enumerate}
\item all Higgs bundles ${\mathfrak Q}_{s,\gE}$ are H-nef for all $s$, and
\item the determinant line bundle $\det E$ is nef.
\end{enumerate}
If both $\gE$ and $\gE^{\vee}$ are Higgs numerically effective, $\gE$ is said to be Higgs numerically flat (H-nflat).
\end{defin}

Definition \ref{moddef} implies that the first Chern class of an H-nflat Higgs bundle is numerically equivalent to zero. Note that if $\gE\,=\,(E,\,\phi)$, with $E$ nef in the usual sense, then $\gE$ is H-nef. If $\phi\,=\,0$, the Higgs bundle $\gE\,=\,(E,\,0)$ is H-nef if and only if $E$ is nef in the usual sense.

The following statement generalizes Proposition 1.2 (12) in \cite{CP} to H-nef Higgs bundles; it will used below to prove the key Lemma of this paper. The proof is an easy adaptation of that in \cite{CP}.
\begin{prop}\label{keyprop}
Let $\gE=(E,\phi)$ be an H-nef Higgs bundle on $X$ and let  $\gE^{\vee}=(E^{\vee},\phi^{\vee})$ be the dual Higgs bundle. If $s$ is a $\phi^{\vee}$-invariant section of $E^{\vee}$, then $s$ has no zeroes.
\end{prop}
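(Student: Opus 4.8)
The plan is to argue by contradiction: a zero of $s$ would produce, after restricting to a suitable curve, a rank-one Higgs quotient of $\gE$ of negative degree, which H-nefness forbids. First I would record the Higgs-theoretic meaning of the hypothesis. A section $s$ of $E^{\vee}$ is a morphism $s\colon\cO_X\to E^{\vee}$, and $\phi^{\vee}$-invariance, i.e. $\phi^{\vee}(s)=s\otimes\lambda$ for some $\lambda\in H^0(\Omega^1_X)$, says exactly that the rank-one image subsheaf $\Im(s)\subset E^{\vee}$ is $\phi^{\vee}$-invariant, hence a Higgs subsheaf of $\gE^{\vee}$. Suppose, toward a contradiction, that $s\neq 0$ but $s$ vanishes at some point $x\in X$; then its zero locus $Z(s)$ is a proper closed subset.

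Next I would pass to a curve. Choose a smooth projective curve $C$ and a morphism $f\colon C\to X$ whose image passes through $x$ but is not contained in $Z(s)$ (for instance the normalization of a general complete-intersection curve through $x$). Pulling back, $f^{*}s$ is a nonzero section of $(f^{*}E)^{\vee}=f^{*}(E^{\vee})$ that is invariant for the pulled-back Higgs field and still vanishes at a preimage of $x$. The underlying point is that $f^{*}\gE$ is again H-nef, since the Higgs Grassmannian construction and the determinant are compatible with pullback, so H-nefness is stable under $f$. On $C$, let $\bar L\subset (f^{*}E)^{\vee}$ be the saturation of the line subsheaf $\Im(f^{*}s)$. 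Saturation preserves $\phi^{\vee}$-invariance (the quotient $\bar L/\Im(f^{*}s)$ is torsion while $\Omega^1_C$ is locally free), so $\bar L$ is a $\phi^{\vee}$-invariant sub-line-bundle; moreover $\bar L\cong\cO_C(D)$ for the nonzero effective zero divisor $D$ of $f^{*}s$, whence $\deg\bar L>0$.

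Dualizing the saturated inclusion $\bar L\hookrightarrow (f^{*}E)^{\vee}$ gives a surjection $f^{*}E\to\bar L^{\vee}$ which is a morphism of Higgs bundles, i.e. a rank-one Higgs quotient of $f^{*}\gE$, with $\deg\bar L^{\vee}=-\deg\bar L<0$. By the universal property of $\hgr_1$ this quotient corresponds to a section $C\to\hgr_1(f^{*}\gE)$ along which the universal quotient pulls back to $\bar L^{\vee}$. Since $f^{*}\gE$ is H-nef, $\gQ_{1,f^{*}\gE}$ is nef, so its pullback $\bar L^{\vee}$ is nef and $\deg\bar L^{\vee}\ge 0$, contradicting $\deg\bar L^{\vee}<0$. (When $\rk\gE=1$ the Higgs Grassmannian degenerates, but the same inequality reads off directly from nefness of $\det E=E$.) Hence $s$ can have no zeroes.

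I expect the main obstacle to lie not in the degree bookkeeping but in making the Higgs-theoretic steps square precisely with the definitions: namely that the saturation of the invariant image is itself $\phi^{\vee}$-invariant and that its dual is a genuine Higgs quotient in the sense of \eqref{g1}, and that the H-nefness of $f^{*}\gE$ really translates, through the universal property of $\hgr_1$, into the sign condition $\deg\bar L^{\vee}\ge 0$ on arbitrary rank-one Higgs quotient line bundles over $C$. Establishing the stability of H-nefness under the pullback $f$ and checking that $C$ can be chosen so that $f^{*}s$ is nonzero yet retains a zero are the remaining points to verify carefully.
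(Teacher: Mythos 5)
Your proof is correct and takes essentially the same route as the paper's: both dualize the invariant section to produce a rank-one Higgs quotient of $\gE$ having negative degree on a curve meeting the zero locus, which contradicts H-nefness. The only difference is order of operations --- the paper dualizes on $X$ itself, observing that the image of $f^{\vee}\colon E\to\cO_X$ is the ideal sheaf of $Z(s)$ and hence has negative degree on a suitable curve, while you restrict to the curve first and saturate before dualizing, which amounts to the same mechanism with the curve-restriction details made explicit.
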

\begin{proof} 
Note that $s$ defines a monomorphism of Higgs sheaves $f\colon (\cO_X,\lambda)\to(E^{\vee},\phi^{\vee})$, where $\phi^{\vee}(s)=s\otimes\lambda$ and $\lambda\in H^0(X,\Omega^1_X)$.  Dualizing this monomorphism, one has a morphism of Higgs sheaves $f^{\vee}:(E,\phi)\to(\cO_X,\lambda)$; if $s$ has zeroes,     then $f^{\vee}$ has zeroes as well, and $\Im f^{\vee}$ is a proper Higgs subsheaf of $(\cO_X,\lambda)$, hence it has negative degree on some curve in $X$. This contradicts the fact that $\gE$  is H-nef.
\end{proof}

\section{Filtering H-nflat Higgs bundles}

In this section, $X$ will be a smooth projective variety over the complex numbers,
equipped with a fixed polarization, which in particular will be used to compute the degrees of the various coherent sheaves considered.

The next Lemma is the key technical result of this paper.

\begin{lem}\label{3.18bis}
Let $\gE=(E,\phi)$ be an H-nflat Higgs bundle on $X$ of rank $r \ge 2$. If $\gE$ is not stable, it can be written as an extension
\begin{equation}\label{ses}
\xymatrix{
0\ar[r] & \gF\ar[r] & \gE\ar[r] & \gQ\ar[r] & 0
}
\end{equation}
where $\gF$ and $\gQ$ are locally free H-nflat Higgs bundles, and $\gF$ is stable.%\footnote{In general $Q$ is just semistable, as it is H-nflat.}
\end{lem}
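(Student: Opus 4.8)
The plan is to obtain $\gF$ as the bottom stable piece of a Jordan--H\"older filtration and then to promote it from a Higgs subsheaf to an H-nflat subbundle. First I would record that, by Definition~\ref{moddef}, an H-nflat Higgs bundle has $c_1(E)$ numerically trivial, so $\mu(\gE)=0$, and that it is semistable: since $\gE$ is H-nef every Higgs quotient has nonnegative degree, whence every Higgs subsheaf has slope $\le 0=\mu(\gE)$. As $\gE$ is semistable of slope $0$ but not stable, I choose among the saturated Higgs subsheaves of slope $0$ and positive rank one, call it $\gF$, of minimal rank; minimality forces $\gF$ to be stable (a proper slope-$0$ Higgs subsheaf would have smaller rank, while semistability rules out positive slope). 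Then $\gQ=\gE/\gF$ is torsion-free of slope $0$ and I have the extension \eqref{ses} at the level of Higgs sheaves; it remains to prove that $\gF$ and $\gQ$ are locally free and H-nflat.

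The main obstacle is local freeness, i.e.\ showing that $\gF$ is a \emph{subbundle} (equivalently that $\gQ$ is locally free), and this is where Proposition~\ref{keyprop} should enter. Let $U\subset X$ be the maximal open set, with complement of codimension $\ge 2$, over which $\gQ$ is locally free; dualizing \eqref{ses} over $U$ exhibits $\gQ|_U$ as a Higgs quotient of the H-nef bundle $\gE$ and $\gF^{\vee}|_U$ as a Higgs quotient of the H-nef bundle $\gE^{\vee}$. Hence $\det\gQ$ and $\det\gF^{\vee}$ are nef on $U$, and being line bundles they are nef on all of $X$; since $\det\gF\otimes\det\gQ=\det\gE$ is numerically trivial, this forces $c_1(\gF)\equiv c_1(\gQ)\equiv 0$. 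Now the quotient bundle $\gF^{\vee}|_U$ of $\gE^{\vee}$ corresponds, by the universal property of the Higgs Grassmannian, to a section $U\to\hgr_{\rk\gF}(\gE^{\vee})$, and to extend it across the codimension $\ge 2$ locus I would reinterpret the data as a $\phi^{\vee}$-invariant section of an H-nef Higgs bundle built from $\gE^{\vee}$ and the now numerically trivial $\det\gF$, so that Proposition~\ref{keyprop} guarantees it has no zeroes. The section then extends, making $\gF^{\vee}$, and hence $\gF$ and $\gQ$, locally free on all of $X$. I expect this extension-across-codimension-two step to be the genuinely delicate point.

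With everything locally free the H-nflatness is formal. The quotient $\gQ$ is a genuine Higgs quotient of the H-nef bundle $\gE$, hence H-nef, and $c_1(\gQ)\equiv 0$; the Higgs analogue of the Demailly--Peternell--Schneider criterion \cite{DPS} (an H-nef Higgs bundle with numerically trivial first Chern class is H-nflat) then shows $\gQ$ is H-nflat. The projection $\gE\twoheadrightarrow\gQ$ is thus a morphism of H-nflat Higgs bundles, so by \cite{BBG} its kernel $\gF$ is locally free and H-nflat; and $\gF$ is stable by construction. This yields the extension \eqref{ses} with all the required properties.
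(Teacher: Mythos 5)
Your overall skeleton --- take a saturated Higgs subsheaf $\gF$ of slope zero and minimal rank (hence stable), upgrade it to a subbundle using Proposition \ref{keyprop}, then handle the other piece via \cite{BBG} --- is indeed the paper's strategy, but the steps you treat as routine or explicitly defer are exactly where the content lies, and as written they fail. First, ``nef on $U$, and being line bundles they are nef on all of $X$'' is not a valid inference: nefness must be tested on \emph{all} curves of $X$, including curves meeting or contained in the codimension-two locus $X\setminus U$, and there is no extension principle of this kind. The paper instead works with the globally defined exact sequence $0\to\det\gF\to\bigwedge^p\gE\to\mathfrak R\to 0$, pulls it back to an \emph{arbitrary} curve $f\colon C\to X$, splits $f^{*}\mathfrak R$ into its torsion and locally free parts, and observes that the locally free part is an H-nef Higgs quotient of $f^{*}\bigwedge^p\gE$; this gives $\deg f^{*}\mathfrak R\ge 0$, hence $\deg f^{*}\det F\le 0$, curve by curve. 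Second, even granting nefness, your deduction of $c_1(\gF)\equiv c_1(\gQ)\equiv 0$ is a non sequitur: since $c_1(E)\equiv 0$, the two classes you exhibit as nef, namely $c_1(Q)$ and $-c_1(F)$, are numerically the \emph{same} class, so you have only shown that this one class is nef, which does not force it to vanish. The missing ingredient is the lemma that a nef class of degree zero with respect to the polarization is numerically trivial (Lemma 3.13 of \cite{crelle}, going back to the proof of Corollary 1.19 of \cite{DPS}), applied using your normalization $\mu(\gF)=0$.

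Third, the step you explicitly leave as a black box (``I would reinterpret the data \ldots I expect this \ldots to be the genuinely delicate point'') is where an idea is missing, and your framing of the difficulty is off. There is no extension-across-codimension-two problem: the inclusion $\det\gF\hookrightarrow\bigwedge^p\gE$ is defined on all of $X$ and is an invariant section of $(\det\gF)^{\vee}\otimes\bigwedge^p\gE$, which is the dual of the H-nef Higgs bundle $\det\gF\otimes\bigwedge^p\gE^{\vee}$ (this is where numerical flatness of $\det F$ is used); Proposition \ref{keyprop} applies directly and shows this section has no zeroes, i.e., $\det\gF$ is a line \emph{subbundle} of $\bigwedge^p\gE$. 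The genuinely missing step in your outline comes after that: knowing the determinant is a subbundle does not by itself make $\gF$ a subbundle of $\gE$; one needs the decomposability argument of Lemma 1.20 in \cite{DPS} to descend from $\det\gF\subset\bigwedge^p\gE$ to $\gF\subset\gE$, and nothing in your proposal plays this role. Finally, your last paragraph rests on a ``Higgs analogue of the Demailly--Peternell--Schneider criterion'' (H-nef plus numerically trivial $c_1$ implies H-nflat); no such result is available in the references the paper relies on, and it belongs to the same circle of statements this very lemma is meant to establish, so invoking it is close to circular. The paper argues the other way around: it concludes that $\gF$ itself is H-nflat and then applies Proposition 3.7 of \cite{BBG} to the monomorphism $\gF\to\gE$ of H-nflat Higgs bundles, obtaining that the cokernel $\gQ$ is locally free and H-nflat.
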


\begin{proof}
Note that $\gE$ is semistable by Proposition~A.8 of \cite{BG3} and has degree zero. Let $\gF=(F,\psi) $ be a Higgs subsheaf of $E$ of rank $p$, with $0 < p < r$. As $\gE$ is semistable of zero degree, $\bigwedge^p\gE$ is semistable of zero degree as well \cite[Corollary ~3.8]{S92}. Let $\det F=\left(\bigwedge^pF\right)^{\vee\vee}$ be the determinant of $F$, and let $\det \gF$ be the sheaf $\det F$ equipped with the naturally induced Higgs field. As $\det \gF$ injects into $\bigwedge^p\gE$ (as Higgs sheaf), we have $\deg F \le 0$. 

We can assume that $\gF$ is a reflexive Higgs subsheaf of $\gE$ of minimal rank $p>0$ with $\deg F= 0$. Then $\gF$ is stable. We have an exact sequence
\begin{equation} \label{mu}
\xymatrix{
0\ar[r] & \det\gF\ar[r] & \bigwedge^p\gE\ar[r] & \mathfrak R\ar[r] & 0
}
\end{equation}
where $\mathfrak R= (R,\chi)$ is the quotient Higgs sheaf. We use this to show that $(\det F)^\vee$ is nef. Let $f\colon C \to X$ be a morphism, where $C$ is a smooth projective irreducible curve. Then $f^{*} R$ splits as $\tilde R \oplus T$, where $\tilde R$ is locally free and $T$ is torsion. It is easy to check that $T$ with the restriction of the pullback Higgs field is a Higgs sheaf. Then $\tilde R$, again with the restriction of the pullback Higgs field, is a Higgs bundle, and is a quotient of $f^{*}\left(\bigwedge^p\gE\right)$; therefore it is H-nef, and then $\deg f^{*} R\ge 0$. Then $\deg (f^{*} \det F )\le 0$, and since the choice of $C$ is arbitrary, $(\det F)^\vee$ is nef. 

Now by Lemma 3.13 in \cite{crelle}\footnote{Actually that result was already contained in the proof 
Corollary 1.19 in \cite{DPS}.} one has $c_1(F)=0$, and by Proposition 1.2.(9) in \cite{CP} $\det(F)$ is numerically flat. 
Tensoring the exact sequence \eqref{mu} by $\det^{-1}\gF$ one obtains a
$\det\psi^{\vee}\otimes\phi^p$-invariant section $\sigma\colon (\cO_X,\lambda)\to\left(\det\gF\right)^{\vee}\otimes\bigwedge^p\gE$, where $\phi^p$ is the Higgs field of $\displaystyle\bigwedge^p\gE$ and $\left(\det\psi^{\vee}\otimes\phi^p\right)(\sigma)=\sigma\otimes\lambda$. By Proposition \ref{keyprop}, $\sigma$ has no zeroes, that is, $\det\gF$ is a Higgs subbundle of $\displaystyle\wedge^p\gE$; by Lemma 1.20 in \cite{DPS} $\gF$ is a Higgs subbundle of $\gE$. From all this, $\gF$ is an H-nflat Higgs bundle, and then by Proposition 3.7 in \cite{BBG}   the quotient Higgs sheaf $\gQ$ is locally free and H-nflat as well.
\end{proof}

\begin{thm}\label{3.18ter}
An H-nflat Higgs bundle $\gE=(E,\phi)$ on a smooth projective variety $X$ is pseudostable (i.e., it has a filtration whose quotients are locally free and stable), and moreover the quotients of the filtration are H-nflat.
\end{thm}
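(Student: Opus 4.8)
The plan is to induct on the rank $r=\rk\gE$, with Lemma~\ref{3.18bis} as the driving tool. The base case $r=1$ is immediate: a rank-one Higgs bundle admits no Higgs subsheaf of intermediate rank, hence is automatically stable, and the trivial filtration $0\subset\gE$ has its unique quotient $\gE$ locally free, stable, and H-nflat by hypothesis.

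For the inductive step, suppose $r\ge 2$ and that the assertion holds in all smaller ranks. If $\gE$ is stable there is nothing to prove, so assume it is not. Then Lemma~\ref{3.18bis} yields a short exact sequence of Higgs bundles
\begin{equation*}
0\to\gF\to\gE\xrightarrow{\;\pi\;}\gQ\to 0
\end{equation*}
with $\gF$ stable, locally free and H-nflat, and $\gQ$ locally free, H-nflat, of rank $<r$. By the inductive hypothesis $\gQ$ carries a filtration of Higgs sheaves $0=\gQ_{m+1}\subset\gQ_m\subset\dots\subset\gQ_0=\gQ$ whose quotients $\gQ_i/\gQ_{i+1}$ are locally free, stable and H-nflat.

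I would then lift this filtration along $\pi$ by putting $\gE_i:=\pi^{-1}(\gQ_i)$ for $0\le i\le m$ and $\gE_{m+1}:=\gF$. Since $\pi$ is a Higgs morphism and each $\gQ_i$ is $\phi$-invariant, every $\gE_i$ is a Higgs subsheaf of $\gE$, giving a filtration
\begin{equation*}
0\subset\gF=\gE_{m+1}\subset\gE_m\subset\dots\subset\gE_0=\gE .
\end{equation*}
The standard correspondence between subobjects of $\gQ$ and subobjects of $\gE$ containing $\gF$, read in the Higgs category, identifies $\gE_i/\gE_{i+1}\cong\gQ_i/\gQ_{i+1}$ for $0\le i\le m$, while the bottom quotient is $\gF$; thus every successive quotient is locally free, stable and H-nflat, as required.

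The steps that genuinely need attention are the verification that each preimage $\gE_i$ is $\phi$-invariant and that the isomorphisms $\gE_i/\gE_{i+1}\cong\gQ_i/\gQ_{i+1}$ respect the induced Higgs fields, since stability and H-nflatness are properties internal to the Higgs category and must be transported through these identifications. Everything else is bookkeeping: local freeness of the quotients comes directly from Lemma~\ref{3.18bis} and the inductive hypothesis, and because $\gE$ is semistable of slope zero while each H-nflat stable quotient also has slope zero, the filtration produced is in fact a Jordan-H\"older filtration.
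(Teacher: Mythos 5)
Your proof is correct and is essentially the same as the paper's: both hinge entirely on Lemma~\ref{3.18bis} to split off a stable, locally free, H-nflat Higgs subbundle and then transport the filtration of the quotient $\gQ$ back into $\gE$ by taking preimages. The only difference is presentational — you organize the argument as an induction on the rank and invoke the subobject correspondence, whereas the paper iterates the lemma on successive quotients and records the lifting step via a snake-lemma diagram; these yield the identical filtration.
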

 
\begin{proof} We use Lemma \ref{3.18bis} as the basis for an iterative proof. Note that in eq.~\eqref{ses} the Higgs bundle $\gQ$, if it is not stable, satisfies the same hypotheses as $\gE$, so that it sits in an exact sequence
\begin{equation*}
\xymatrix{
0\ar[r] & \gQ_1\ar[r] & \gQ\ar[r] & \gQ_2\ar[r] & 0 
}
\end{equation*}
where $\gQ_1$ and $\gQ_2$ are locally free and H-nflat and $\gQ_1$ is stable. By the snake Lemma we have a diagram
\begin{equation} \xymatrix{
 & & & 0 \ar[d]\\
 & 0\ar[d] & 0\ar[d] & \gQ_1\ar[d]\\
0\ar[r] & \gF \ar[r]\ar[d] & \gE \ar[r]\ar[d] & \gQ \ar[r]\ar[d] & 0 \\
0\ar[r] & \gF_1 \ar[r]\ar[d] & \gE \ar[r]\ar[d] & \gQ_2 \ar[r]\ar[d] & 0\\
 & \gQ _1 \ar[d] & 0 & 0 \\
 & 0}
\end{equation} 
Note that again $\gF_1$ is locally free and H-nflat by Proposition~3.7 in \cite{BBG}. Now
$$ 0 \subset \gF \subset \gF_1 \subset \gE $$
is a filtration whose quotients $\gQ_1$ and $\gQ_2$ are locally free and H-nflat; moreover, $\gF$ and $\gQ_1$ are stable. If $\gQ_2$ is stable as well, the claim is proved. If it is not, we iterate the procedure, until we get a quotient which is stable (possibly a line bundle). At step $k$ we shall have the diagram
\begin{equation} \xymatrix{
&& & 0 \ar[d]\\
 & 0\ar[d] & 0\ar[d] & \gQ_{k}\ar[d]\\
0\ar[r] & \gF_{k-1}\ar[r]\ar[d] & \gE \ar[r]\ar[d] & \gQ_{k-1} \ar[r]\ar[d] & 0 \\
0\ar[r] & \gF_k \ar[r]\ar[d] & \gE \ar[r]\ar[d] & \gQ_{k+1} \ar[r]\ar[d] & 0\\
 & \gQ _{k} \ar[d] & 0 & 0 \\
& 0}
\end{equation} 
and if $n$ is the last step we get a filtration
\begin{equation}\label{JH} 0 \subset \gF \subset \gF_1 \subset \dots \subset \gF_n \subset \gE\end{equation}
whose quotients $\gQ_1,\dots,\gQ_{n+1}$ are locally free, stable and H-nflat.
\end{proof}
 
\begin{corol} If $\gE=(E,\phi)$ is an H-nflat Higgs bundle on a smooth projective variety $X$ such that all the quotients  of the filtration \eqref{JH} have rank 1, then $c_i(E)=0$.\footnote{Heuristically, these are the H-nflat Higgs bundles that are the farthest from being stable.}
\end{corol}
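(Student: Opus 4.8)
The plan is to reduce everything to the multiplicativity of the total Chern class along the filtration supplied by Theorem~\ref{3.18ter}. First I would invoke that theorem to realize $\gE$ as the iterated extension encoded by the filtration \eqref{JH}, whose successive quotients $\gQ_1,\dots,\gQ_{n+1}$ are locally free, stable and H-nflat. By the standing hypothesis each quotient has rank one, so each underlying sheaf is a line bundle $L_j$.

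Next I would unwind what H-nflatness says in rank one. By Definition~\ref{moddef} a rank-one Higgs bundle is H-nef exactly when its underlying line bundle is nef, so H-nflatness of $\gQ_j$ forces both $L_j$ and $L_j^{\vee}$ to be nef; that is, each $L_j$ is a numerically flat line bundle. For a line bundle this means $c_1(L_j)\cdot C=0$ for every irreducible curve $C\subset X$, so $L_j$ is numerically trivial and hence $c_1(L_j)=0$ in $H^2(X,\Q)$. This last step is the rank-one case of the vanishing of Chern classes of numerically flat bundles recorded in \cite{DPS}; equivalently it follows from the exponential sequence, since a numerically trivial line bundle lies in $\Pic^{\tau}(X)$ and therefore maps to a torsion, thus rationally trivial, class in $H^2(X,\Z)$.

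Finally, since \eqref{JH} is a filtration by subbundles with locally free quotients, the underlying vector bundles fit into short exact sequences to which the Whitney formula applies --- the Higgs fields being irrelevant to the computation of Chern classes --- so that
\[
  c(E)=\prod_{j} c(L_j)=\prod_{j}\bigl(1+c_1(L_j)\bigr)=1
\]
in $H^{*}(X,\Q)$, giving $c_i(E)=0$ for all $i>0$. The argument is essentially formal once Theorem~\ref{3.18ter} is available; the only non-bookkeeping point is the passage, in the middle paragraph, from numerical flatness of each $L_j$ to the genuine rational vanishing of $c_1(L_j)$ (and not merely its numerical triviality), which is precisely where the rank-one hypothesis pays off: it turns the Chern class of $\gE$ into a product of line-bundle contributions, for which nothing beyond numerical flatness is needed.
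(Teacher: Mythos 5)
Your proposal is correct and follows exactly the paper's own argument: the paper's proof is the one-line observation that each $\gQ_k$ is an H-nflat line bundle, hence $c_1(Q_k)=0$, so $c_i(E)=0$ by multiplicativity along the filtration \eqref{JH}. You simply make explicit the two points the paper leaves implicit --- that numerical flatness of a line bundle forces its rational first Chern class (not just its numerical class) to vanish, and the Whitney-formula bookkeeping --- both of which are sound.
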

\begin{proof}
Indeed $c_1(Q_k)=0$ for all $k\ge 1$ as each $\gQ_k$ is an H-nflat line bundle, so that $c_i(E)=0$ for all $i$.
\end{proof}

\begin{remark}
By the uniqueness up to isomorphism of the Jordan-H\"older filtration, \eqref{JH} is a Jordan-H\"older filtration of $\gE$. 
\end{remark}

\section{The conjecture for Calabi-Yau manifolds}

We prove that simply connected Calabi-Yau manifolds are Higgs varieties by proving the Conjecture in the form of the Conjecture \ref{thesecondconj}.
\begin{thm} If $X$ is a simply connected Calabi-Yau variety, and $\gE=(E,\phi)$ is an H-nflat Higgs bundle
on it, then $c_i(E)=0$ for all $i\ge 1$.
\end{thm}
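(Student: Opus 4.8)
The plan is to combine the filtration from Theorem~\ref{3.18ter} with the cited result from \cite{BBGL} on the vanishing of the Higgs field of polystable Higgs bundles over simply connected Calabi--Yau manifolds. By Theorem~\ref{3.18ter}, the H-nflat Higgs bundle $\gE$ admits a Jordan--H\"older filtration \eqref{JH} whose successive quotients $\gQ_1,\dots,\gQ_{n+1}$ are locally free, stable and H-nflat. Since each $\gQ_k$ is H-nflat, it has degree zero with respect to the polarization, so all the quotients have the same slope zero; hence the associated graded object $\operatorname{Grad}(\gE)=\bigoplus_k \gQ_k$ is a \emph{polystable} Higgs bundle of slope zero on $X$.

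First I would invoke the result of \cite{BBGL}: on a simply connected Calabi--Yau manifold, a polystable Higgs bundle has vanishing Higgs field. Applied to each stable summand $\gQ_k=(Q_k,\phi_k)$, this forces $\phi_k=0$, so that $\gQ_k=(Q_k,0)$ is an \emph{ordinary} stable vector bundle. The point is that, once the Higgs field is gone, the H-nflatness of $\gQ_k$ reduces to ordinary numerical flatness of $Q_k$ in the sense of \cite{DPS}: indeed, as already remarked in the excerpt, for $\phi=0$ the Higgs bundle $(Q_k,0)$ is H-nef precisely when $Q_k$ is nef, and dually for $(Q_k,0)^\vee$, so $Q_k$ is numerically flat as a plain vector bundle.

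Next I would apply the theorem of Demailly--Peternell--Schneider \cite{DPS} (the result cited in the excerpt as the $\phi=0$ case of Conjecture~\ref{thesecondconj}), which states that a numerically flat vector bundle has all Chern classes equal to zero: thus $c_i(Q_k)=0$ for every $i>0$ and every $k$. Finally, Chern classes are multiplicative in short exact sequences, so the total Chern class of $E$ is the product of the total Chern classes of the quotients of the filtration \eqref{JH}, namely $c(E)=\prod_{k=1}^{n+1} c(Q_k)=1$. This yields $c_i(E)=0$ for all $i\ge 1$, as claimed.

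The main obstacle I anticipate is the reduction step establishing that the \emph{graded} object is genuinely polystable as a \emph{Higgs} bundle in the precise sense required by \cite{BBGL}, and that the vanishing of its Higgs field indeed descends to each stable quotient appearing in \eqref{JH}. One must check that semistability of $\gE$ together with the equality of slopes of all quotients makes $\operatorname{Grad}(\gE)$ fall within the hypotheses of the cited vanishing theorem, and that the Higgs field induced on each $\gQ_k$ is exactly the restriction used there. Once this identification is secured, the remaining arguments --- numerical flatness of the underlying bundles via \cite{DPS}, and multiplicativity of Chern classes --- are routine.
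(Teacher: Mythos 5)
Your proof is correct and follows essentially the same route as the paper: pass to the Jordan--H\"older graded object from Theorem~\ref{3.18ter}, use Corollary~2.6 of \cite{BBGL} to kill the Higgs field, and conclude via the \cite{DPS} theorem on numerically flat bundles. The only (immaterial) difference is that you apply \cite{DPS} to each stable quotient $Q_k$ and use multiplicativity of Chern classes, whereas the paper applies it once to the graded bundle itself; your worry about polystability of the graded object is unfounded, since the quotients are stable of equal slope zero by construction.
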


\begin{proof}
To compute the Chern classes of $\gE$, we can replace it with the graded module associated  to the filtration \eqref{JH}, i.e., we can assume that $\gE$ is polystable and H-nflat. Then by the main result in \cite{BBGL} (Corollary 2.6), $\phi=0$. So $E$ is actually numerically flat as a vector bundle, and then $c_i(E)=0$ \cite{DPS}.
\end{proof}
This generalizes Theorem 6.4 of \cite{BLL}, where this result was proved for K3 surfaces.

\begin{remark}
Corollary 2.6 in  \cite{BBGL} states that if $\gE=(E,\phi)$ is a polystable Higgs bundle on a simply connected Calabi-Yau manifold, then $\phi=0$. If $\gE$ is semistable, then the Higgs field on the graded module of the Jordan-H\"older filtration of $\gE$ vanishes, but $\phi$ itself may be nonzero (examples are given in \cite{BLL}).
\end{remark}

\section{Higgs varieties and special metrics}\label{5}

In \cite{crelle}, following the work of De Cataldo \cite{DeC} for ordinary vector bundles, a notion of numerical effectiveness for Higgs bundles was given in terms of bundle metrics. If $\gE =(E,\phi)$ is a Higgs bundle, and $h$ is an Hermitian metric on $E$, one defines the {\em Hitchin-Simpson connection} of the pair $(\gE,h)$ as
$$ \mathcal D_{(h,\phi)} = D_ h + \phi + \bar \phi $$
where $D_h$ is the Chern connection of the Hermitian bundle $(E,h)$, and $\bar\phi$ is the metric adjoint of $\phi$ defined as 
\begin{displaymath}
h(s,\phi(t))=h\left(\overline{\phi}(s),t\right)
\end{displaymath}
for all sections $s,t$ of $E$. The curvature $\cR_{(h,\phi)}$ of the Hitchin-Simpson connection defines a bilinear form on $T_X\otimes E$, where $T_X$ is the tangent bundle to $X$, by letting
\begin{equation}\label{curvatilde} 
\widetilde{\cR}_{(h,\phi)}(u\otimes s,v\otimes t)=\frac{i}{2\pi}\left\langle h\left({\cR}_{(h,\phi)}^{(1,1)}(s),t\right),u\otimes v\right\rangle\,.
\end{equation}
where ${\cR}^{(1,1)}_{(h,\phi)}$ is the $(1,1)$-part of $\cR_{(h,\phi)}$, and $\langle\,,\rangle$ is the scalar product given by the K\"ahler form associated with the given polarization of $X$.
 
\begin{defin} A Higgs bundle $\gE=(E, \phi)$ on $X$ is said to be
\begin{enumerate}
\item $1$-H-nef if for every $\xi > 0$ there exists an Hermitian
metric $h_{\xi}$ on $E$ such that the bilinear form $$\widetilde{\cR}_ {\left(\gE,h_{\xi}\right)}  + \xi \omega \otimes h_\xi$$
is semipositive definite on all
sections of $T_X\otimes E$ that, at every point $x$ in their domain, define a rank one tensor in  $(T_X)_x\otimes E_x$; 
\item $1$-H-nflat if both $\gE$ and $\gE^{\vee}$ are $1$-H-nef.
\end{enumerate}
\end{defin}
 
It was shown in \cite{crelle} that   $1$-H-nef Higgs bundles are H-nef. The opposite implication is known to hold for Higgs line bundles, and for Higgs bundles on curves; it is unknown whether it holds in general. This fact is related to the Conjectures \ref{theconj} and \ref{thesecondconj}; indeed, it was shown in \cite{crelle} that Conjecture \ref{thesecondconj} holds if\,\textquotedblleft H-nflat\textquotedblright\,is replaced by\,\textquotedblleft $1$-H-nflat\textquotedblright.
 
Actually Theorem \ref{3.18ter} implies the following result.
 
\begin{thm} \label{anotherconj} The following conditions are equivalent. 
\begin{enumerate}\itemsep=-3pt
\item Every H-nflat Higgs bundle on the projective variety $X$ is $1$-H-nflat.
\item $X$ is a Higgs variety. 
\end{enumerate}
\end{thm}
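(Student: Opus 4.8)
The plan is to establish the two implications separately; the implication (i) $\Rightarrow$ (ii) is immediate, while (ii) $\Rightarrow$ (i) carries all the content and is where Theorem \ref{3.18ter} is used. For (i) $\Rightarrow$ (ii), let $\gE=(E,\phi)$ be any H-nflat Higgs bundle on $X$. By hypothesis it is $1$-H-nflat, and, as recalled above following \cite{crelle}, Conjecture \ref{thesecondconj} holds once ``H-nflat'' is replaced by ``$1$-H-nflat''; hence $c_i(E)=0$ for all $i>0$. Since $\gE$ was arbitrary, $X$ is a Higgs variety.

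For the converse, assume $X$ is a Higgs variety and let $\gE=(E,\phi)$ be H-nflat. First I would apply Theorem \ref{3.18ter} to obtain the Jordan--H\"older filtration \eqref{JH}, whose successive quotients $\gQ_1,\dots,\gQ_{n+1}$ are locally free, stable and H-nflat. Because $X$ is a Higgs variety, each $\gQ_k$ has vanishing Chern classes, and in particular it is stable of degree zero with vanishing first Chern class and vanishing discriminant. By the Kobayashi--Hitchin correspondence for Higgs bundles \cite{S92}, each $\gQ_k$ then admits a Hermitian metric whose Hitchin--Simpson connection is flat; for such a metric the curvature form $\widetilde{\cR}$ of \eqref{curvatilde} vanishes identically, so that $\widetilde{\cR}+\xi\,\omega\otimes h$ is semipositive on rank one tensors for every $\xi>0$. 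The same reasoning applies to the dual $\gQ_k^{\vee}$, which is again stable and H-nflat with vanishing Chern classes; hence every $\gQ_k$ is $1$-H-nflat.

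It remains to propagate $1$-H-nflatness from the graded pieces to $\gE$ itself. Since $\gE$ is obtained from the $\gQ_k$ by iterated extensions, and $\gE^{\vee}$ is likewise obtained from the $\gQ_k^{\vee}$, it is enough to prove that an extension $0\to\gF\to\gE\to\gQ\to0$ of $1$-H-nef Higgs bundles is $1$-H-nef. The natural approach is to fix metrics on $\gF$ and $\gQ$ realizing the $\xi$-positivity, transport them to $E$ through a $C^{\infty}$ splitting, and compare the Hitchin--Simpson curvature of $E$ with the block-diagonal one: the two differ only through the second fundamental form, and a rescaling of the metric on the quotient factor should absorb the resulting off-diagonal terms into the perturbation $\xi\,\omega\otimes h$. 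I expect this last point to be the main obstacle, namely securing a bound on the second fundamental form that is uniform on rank one tensors and keeps the perturbed form semipositive for every $\xi>0$; this is precisely the type of analytic estimate developed in \cite{crelle}. Once this extension property is in hand the conclusion is formal: both $\gE$ and $\gE^{\vee}$ are $1$-H-nef, so $\gE$ is $1$-H-nflat.
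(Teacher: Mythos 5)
Your overall strategy coincides with the paper's: the implication (i) $\Rightarrow$ (ii) is handled exactly as in the paper, and for (ii) $\Rightarrow$ (i) you correctly invoke Theorem \ref{3.18ter}, observe that the stable, locally free, H-nflat quotients $\gQ_k$ of the filtration \eqref{JH} have vanishing Chern classes because $X$ is a Higgs variety, and deduce that each $\gQ_k$ carries a metric with flat Hitchin--Simpson connection. (The paper reaches the same conclusion in two steps: it produces a Hermitian--Yang--Mills metric on each $\gQ_k$ via \cite{Si-var} and then shows that the full Hitchin--Simpson curvature vanishes by a Chern--Weil computation using $\operatorname{ch}_2(Q_k)=0$; your direct appeal to the Kobayashi--Hitchin correspondence for stable Higgs bundles with vanishing Chern classes is an acceptable shortcut to the same fact.)

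The genuine gap is the last step, which you yourself flag as the main obstacle: propagating $1$-H-nflatness from the graded pieces to $\gE$. You reduce this to the claim that an extension of $1$-H-nef Higgs bundles is $1$-H-nef, and then only sketch a splitting-plus-rescaling argument, conceding that the required estimate on the second fundamental form is not secured. This reduction is in fact problematic: for merely $1$-H-nef pieces the metrics $h_\xi$ realizing the curvature condition vary with $\xi$, so the second fundamental form computed with respect to them admits no bound that is uniform as $\xi \to 0$, and the rescaling trick does not obviously close; this general extension statement is not available in the literature the paper relies on, and proving it could well be as hard as the theorem you are trying to prove. The repair is to use the stronger information you have already established: the quotients are not just $1$-H-nef but Hermitian flat, with metrics independent of $\xi$, so the second fundamental forms are fixed and the rescaling argument does work. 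This is precisely Theorem 3.16 of \cite{crelle}, which is what the paper cites at this point to conclude that a Higgs bundle with a filtration whose quotients have vanishing Hitchin--Simpson curvature is $1$-H-nflat. With that citation (or with the rescaling argument carried out under the flatness hypothesis rather than mere $1$-H-nefness), your proof becomes complete and matches the paper's.
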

\begin{proof}
The fact that condition (i) implies (ii) follows from the previous discussion. To prove the opposite implication, let $\gE=(E,\phi)$ be an H-nflat Higgs bundle. The quotients $\gQ_k=(Q_k,\phi_k)$ of the filtration in Theorem \ref{3.18ter} are H-nflat, and since $X$ is a Higgs variety, they have vanishing Chern classes; moreover, as they are stable, they carry {\em Hermitian-Yang-Mills} metrics, i.e., on each bundle $Q_k$ there is an Hermitian metric $h_k$ such that the mean curvature $\mathcal K_{\left(h_k,\phi_k\right)}$ of the Hitchin-Simpson connection vanishes \cite{Si-var}. Then we have
\begin{multline}
0= -4\pi^2 \operatorname{ch}_2(Q_k) = 
\int_X\operatorname{tr}\left(\cR_{\left(h_k,\phi_k\right)}\wedge\cR_{\left(h_k,\phi_k\right)}\right)\wedge\omega^{n-2}= \\ = 
\gamma_{1,k}\left\|\cR_{\left(h_k,\phi_k\right)}\right\|^2
-\gamma_{2,k}\left\|\mathcal K_{\left(h_k,\phi_k\right)}\right\|^2
=\gamma_{1,k}\left\|\cR_{\left(h_k,\phi_k\right)}\right\|^2
\end{multline}
for some positive constants $\gamma_{1,k}$ and $\gamma_{2,k}$, where the norms are $L^2$-norms, and $\omega$ is the K\"ahler form. So all the Hitchin-Simpson curvatures of the Higgs bundles $\gQ_k$ vanish. Finally, 
Theorem 3.16 of \cite{crelle} implies that $\gE$ is $1$-H-nflat.
\end{proof}
 
 \begin{remark} If we set the Higgs field to zero in Theorem \ref{anotherconj}, i.e., if we apply the Theorem
 to ordinary bundles, we obtain that the notions of 1-numerical flatness and numerical flatness coincide.
 \end{remark}

\bigskip
\frenchspacing

\end{document}